 \newtheorem{thm}{Theorem}[section]
 \newtheorem{prop}[thm]{Proposition}
 \theoremstyle{definition}
 \theoremstyle{remark}
 \newtheorem{rem}[thm]{Remark}
 \numberwithin{equation}{section}
\title[Estimates of holomorphic functions]{Estimates of holomorphic functions in zero-free domains}
\author[A. Borichev] {Alexander Borichev}
\address{Universit\'e Aix-Marseille, 39, rue Joliot Curie, 13453, Marseille Cedex 13, France}           
\email{borichev@cmi.univ-mrs.fr}
\author[V. Petkov] {Vesselin Petkov}
\address{Institut de Math\'ematiques de Bordeaux, 351,
Cours de la Lib\'eration, 33405  Talence, France}
\email{petkov@math.u-bordeaux1.fr}
\thanks{The first author was partially supported by the ANR project DYNOP}
\subjclass{Primary 30D50; Secondary 35P25}
\keywords{scattering operator, resonances, zero-free domain}
\begin{document}

\newcommand{\1}{{\bold 1}}
\newcommand{\F}{{\mathcal F}}
\newcommand{\CC}{{\mathcal C}}
\newcommand{\K}{{\mathcal K}}
\newcommand{\G}{{\mathcal G}}
\newcommand{\Hh}{{\mathcal H}}
\newcommand{\Z}{{\mathbb Z}}
\newcommand{\Q}{{\mathbb Q}}
\newcommand{\U}{{\mathcal U}}
\newcommand{\A}{{\mathbb A}}
\newcommand{\Ss}{{\mathbb S}}
\newcommand{\C}{{\mathbb C}}
\newcommand{\N}{{\mathbb N}}
\newcommand{\R}{{\mathbb R}}
\newcommand{\D}{{\mathbb D}}

\renewcommand{\Re}{\mathop{\rm Re}\nolimits}
\renewcommand{\Im}{\mathop{\rm Im}\nolimits}

\def\e{\varepsilon}
\def\phi {\varphi}
\def\CC{{\mathcal C}}
\def\ra{\rangle}
\def\la{\langle}
\def\eps{\epsilon}
\def\esp{\vspace{8pt}}
\def\eps{\epsilon}
\def\l2{L^2}
\def\Cp{\C_{+}}
\def\ii{{\bf i}}
\def\Cp{{\mathbb C}_{+}}

\newcommand{\T}{\mathbb{T}\,}

\maketitle

\begin{abstract}
We study functions $f(z)$ holomorphic in $\Cp$ having the property $f(z) \neq 0$ for
$0 < \Im z < 1$  and we obtain a lower bounds for $|f(z)|$ for $0 < \Im z < 1.$ In our analysis we deal with
scalar functions $f(z)$ as well as with operator valued holomorphic functions $I + A(z)$ assuming that $A(z)$ is a trace class operator for $z \in \Cp$ and $I + A(z)$ is invertible for $0 < \Im z < 1$ and is unitary for $z \in \R.$
\end{abstract}

\section{Introduction}

The purpose of this paper is to obtain some estimates on holomorphic functions $f(z)$ in $\Cp$ which have no zeros in
a strip $0 < \Im z < a.$ Our main motivation comes from the scattering theory for the wave equation
in the exterior of a bounded connected domain $K \subset \R^n, \: n\geq 3$, odd, with smooth boundary $\partial K$.
Set $\Omega = \R^n \setminus \bar{K}$ and consider the Dirichlet problem
\begin{equation} \label{eq:1.1}
\begin{cases}(\partial^2_t - \Delta) u =  0\:\: {\rm in} \:\: \R_t \times \Omega,\\
u = 0\:\:{\rm on} \: \R_t \times \partial K,\\
u(0, x) = f_0(x),\: u_t(0, x) = f_1(x). \end{cases}
\end{equation}
The scattering operator $S(\lambda)$ related to (\ref{eq:1.1}) is an operator valued function
$$
S(\lambda): L^2(\Ss^{n-1}) \longrightarrow L^2(\Ss^{n-1}), \quad \lambda \in \R,
$$
which has the form $S(\lambda) = I+ K(\lambda)$ with a trace class operator $K(\lambda)$ (see \cite{LP}). The kernel $a(\lambda, \omega, \theta)$ of $K(\lambda)$ is called the scattering amplitude.

The functions $a(\lambda, \omega, \theta)$ and $S(\lambda)$ are holomorphic for $\Im \lambda \geq 0$ and they admit meromorphic continuation in $\C_{-}$ with
poles $\lambda_j, \: \Im \lambda_j < 0,$ independent of $\omega, \theta.$ For $\Im  \lambda \geq 0$ we have the estimate
$$ |a(\lambda, \theta, \omega)| \le Ce^{\alpha \Im \lambda}(1 + |\lambda|)^M,\: \alpha \geq 0$$
uniformly with respect to $(\omega, \theta) \in S^{n-1} \times S^{n-1}$
 and a similar estimate holds for $\|S(z)\|_{L^2 \to L^2},\: z \in \Cp$. The operator $S(x)$ is unitary for $x \in \R$ and we have the equality
\begin{equation} 
\label{eq:1.4}
S^*(\bar{z}) = S^{-1} (z)
\end{equation}
if $S(z)$ is invertible. This equality shows that the poles of $S(z)$ are conjugated to the points $z \in \Cp$ where $S(z)$ is not invertible. In several important examples there exists a strip
$$
\mathcal U_{\delta} = \{z \in \C_{-}:\: -\delta < \Im z \le 0\},
$$
where $S(z)$ admits an holomorphic extension. For non-trapping obstacles and for some trapping ones related to special geometry of the obstacles we have a polynomial bound on $\|S(z)\|$ for 
$z \in {\mathcal U}_{\delta}$. This bound follows from a bound for the cut-off resolvent $R_{\chi}(z) = \chi(-\Delta - z^2)^{-1}\chi,\: \chi \in C_0^{\infty} (\R^n),\: \chi(x) = 1$ on $K$ in ${\mathcal U}_{\delta}$ (see \cite{V} and \cite{TZ} for non-trapping obstacles and \cite{I} for several strictly convex disjoint obstacles). On the other hand,
these estimates are related to the special geometry of the obstacle and on the properties of the dynamical system connected with the reflecting rays.\\ 
It is an interesting and difficult problem to estimate $\|S(z)\|_{L^2 
 \to L^2}$ for $z \in U_{\delta}$ without any {\bf geometric assumptions} on $K$. An estimate of $S(z)$ for $z \in {\mathcal U}_{\delta}$ implies a similar one for the cut-off resolvent $R_{\chi}(z)$ and this leads to several applications concerning the local energy decay.  In \cite{PS1} the second author and L. Stoyanov proposed the following \\

{\bf Conjecture.} {\em Assume that $S(z)$ has no poles in $U_{\delta}.$ Then for $0 < \delta_1 < \delta$ we have the estimate}
\begin{equation} \label{eq:1.5}
\|S(z)\|_{L^2 \to L^2} \leq C_{\delta_1}e^{c|z|^2},\quad c \ge 0,\: \forall z \in U_{\delta_1}.
\end{equation}
In \cite{BP} this conjecture has been proved for $n = 3$ using a reduction to a semiclassical Schr\"odinger
operator and a suitable estimate for the resolvent of a complex scaling operator. For dimensions $n > 3$ the result in \cite{BP}
seems to be not optimal since we may deduce only a bound
$$
\|S(z)\|_{L^2 \to L^2} \leq Ce^{c|z|^{n-1}},\quad c > 0,\: \forall z \in U_{\delta_1}.
$$
By (\ref{eq:1.4}) the problem is reduced to a upper bound 
$$
\|S^{-1}(z)\|_{L^2 \to L^2} \le e^{c|z|^2},\quad 0 \le \Im z \le \delta_1
$$
which implies an estimate for the adjoint operator $S^*(\bar{z})$.\\

Motivated by the above problem for operator valued holomorphic functions we study scalar holomorphic functions in zero-free domains and we obtain in Section 2
some lower bounds on functions holomorphic in $\Cp$ without zeros in the strip $\{z \in \C: 0 < \Im z < 1\}$.
In Proposition 2.1 we obtain a lower bound for $|f(z)|$ which is very close to an optimal one as we show by an example in Proposition 2.3. For functions $f(z)$ growing as ${\mathcal O}(e^{|z|^{\beta}})$, 
$1<\beta<2$, the result is different and we study this class of  functions in Propositions 2.4 and 2.5. As our examples show, the lower bounds cannot be improved if we have zeros $z_k$ with multiplicities $m(z_k) \to + \infty$. In the physically important examples the resonances and the conjugated zeros are simple (see \cite{KZ}) 
and it is important to search conditions
leading to lower bounds $|f(z)| \ge e^{-a|z|}$ in zero-free domains. 
This problem is treated in Proposition 2.6.\\

In Section 3 we examine the case $I+B(z)$, where $B(z)$ is a finite rank operator valued function holomorphic in $\Cp$ such that $(I + B(z))^{-1}$ exists for $0\le\Im z \le\delta$ and
${\text {\it Image}}\,B(z) \subset V$ with a finite dimensional space $V$ independent of $z$. In particular, we cover the case of matrix valued functions $a(z): \C^m \to \C^m$ holomorphic in $\Cp$ with
$\det a(z) \neq 0$ for $0 \le \Im z \le \delta$. In this generality it seems that this is the first result leading to an estimate on the norm of the inverse matrix and some applications in numerical analysis could be interesting. Next we examine an operator valued function $A(z)$ 
holomorphic in $\Cp$, assuming that $A(z)$ is a trace class operator for $z \in \Cp$ and $I + A(x)$ is unitary for $x\in\R$. We obtain an estimate for $\|(I + A(z))^{-1}\|$ provided that $I + A(z)$ is invertible for $0<\Im z<1$.

\section{Estimates for scalar functions}

In this section we start with the following
\begin{prop} Let $f(z)$ be a holomorphic function in $\Cp$ such that for some $\alpha\ge 0$, $C > 0$, $M\in \N$ we have
$$ 
|f(z)|\le C(1 + |z|)^M e^{\alpha \Im z},\quad z \in \overline{\Cp}.
$$
Assume  that $f(z) \neq 0$ for $0 < \Im z < 1$. Then
\begin{equation} \label{eq:2.1}
\lim_{|x| \to \infty} \frac{\log |f(x + \ii /2)|}{x^2} = 0.
\end{equation}
\end{prop}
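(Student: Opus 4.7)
The upper bound $\log|f(x+i/2)| \le M\log(1+|x+i/2|) + \alpha/2 + \log C = O(\log|x|)$ is immediate from the hypothesis, so $\limsup_{|x|\to\infty}\log|f(x+i/2)|/x^2 \le 0$. The substantive task is the matching lower bound, and the plan is to represent $h(z) := \log|f(z)|$ on the line $\Im z = 1/2$ via the Poisson integral on the strip $S = \{0 < \Im z < 1\}$ and carefully control the negative part of the boundary data.

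Since $f$ is zero-free in $S$, the function $h$ is harmonic there with polynomial upper bound in the closed strip. The Poisson kernel of $S$ at height $y = 1/2$ equals $1/(2\cosh(\pi(x-t)))$ on both boundary components, and a Phragm\'en--Lindel\"of argument (the non-trivial harmonic functions on $S$ with vanishing boundary data grow at rate at least $e^{\pi|\Re z|}$) identifies $h$ with its Poisson integral:
$$h(x+i/2) = \int_\R \frac{h(t) + h(t+i)}{2\cosh(\pi(x-t))}\, dt.$$
The formula remains valid when $h = -\infty$ at boundary zeros of $f$ thanks to the local integrability of $\log|t-t_0|$ and the matching of the singularities of $h$ inside $S$ with those of its Poisson extension.

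For the required integrability of the boundary data, introduce $\tilde g(z) := f(z) e^{i\alpha z}(z+i)^{-M}$, which the hypothesis places in $H^\infty(\overline{\Cp})$. Jensen's inequality gives $\int_\R (\log|\tilde g(t)|)_-/(1+t^2)\, dt < \infty$, and since $\log|f(t)| = \log|\tilde g(t)| + (M/2)\log(1+t^2)$ with the second term non-negative, the integrability $(\log|f|)_-/(1+t^2) \in L^1(\R)$ follows; the analogue on $\Im z = 1$ is obtained identically after shifting.

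Finally, decompose $h = h_+ - h_-$ in the integral. The $h_+$ contribution is $O(\log|x|) = o(x^2)$ directly. For $h_-$, split the region at $|t-x| = T := |x|^{1/2}$. On the inner part bound the kernel by $1/2$ and use
$$\int_{|t-x|\le T} h_-(t)\, dt \le (1+(|x|+T)^2)\, \varepsilon(x), \qquad \varepsilon(x) := \int_{|t-x|\le T} \frac{h_-(t)}{1+t^2}\, dt \to 0$$
as $|x|\to\infty$ (tail of a convergent integral over a set receding to infinity), which yields $O(x^2)\cdot\varepsilon(x) = o(x^2)$. On the outer part, the kernel is at most $2e^{-\pi T}$, and a comparable estimate against the integrable $h_-/(1+t^2)$ produces a contribution of order $x^2 e^{-\pi|x|^{1/2}} = o(1)$. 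Adding the analogous bounds for $h(t+i)$ gives $h(x+i/2) \ge -o(x^2)$, which together with the upper bound yields the claim. The main obstacle is precisely this delicate two-region split, since a cruder use of the weighted integrability would only give $O(x^2)$ rather than the necessary $o(x^2)$.
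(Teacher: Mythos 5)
Your argument stands or falls with the identity $h(x+\ii/2)=\int_{\R}\frac{h(t)+h(t+\ii)}{2\cosh(\pi(x-t))}\,dt$, and this step has a genuine gap: the identity is not justified and is in fact false under the stated hypotheses. After normalizing to $f\in H^\infty(\Cp)$, the canonical factorization of $f$ may contain a nontrivial singular inner factor, and its log-modulus is invisible in the almost-everywhere boundary data you integrate. Concretely, take $f(z)=\exp\bigl(-\ii c/(z-t_0)\bigr)$ with $c>0$, $t_0\in\R$ (multiply by $(z-t_0)/(z+\ii)$ if you want continuity up to $\R$): it is bounded and zero-free in all of $\Cp$, $|f|=1$ a.e.\ on $\R$, $h(t+\ii)=-c/((t-t_0)^2+1)$, so the right-hand side of your formula at $x=t_0$ is at least $-c/2$, while $h(t_0+\ii/2)=-2c$. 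In general one only has $h\le P_{S}[h|_{\partial S}^{\rm a.e.}]$, the defect being the (nonpositive) strip-Poisson integral of the singular measure of $f$ on $\R$, plus a priori possible components $c_\pm e^{\pm\pi x}\sin\pi y$ with $c_\pm\le 0$; and this is exactly the wrong direction, since you need a lower bound on $h$. The Phragm\'en--Lindel\"of justification you offer cannot close the gap for two reasons: first, ``vanishing boundary data'' in the a.e./nontangential sense does not imply growth $e^{\pi|\Re z|}$ (the function $\log|\exp(-\ii c/(z-t_0))|$ minus its strip-Poisson integral is a nontrivial negative harmonic function on $S$ with a.e.\ zero boundary data and no such growth); second, to exclude the negative exponential components one would need an a priori lower bound $\log|f|\ge -o(e^{\pi|\Re z|})$ in the strip, which is essentially the estimate being proved, so the argument is circular. (The Proposition itself is of course true; the counterexample only defeats your intermediate identity.)

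The salvageable part is your tail analysis: the kernel value $1/(2\cosh\pi(x-t))$, the integrability of $\log^-|f|$ against $(1+t^2)^{-1}$ on both lines, and the two-region split giving $o(x^2)$ are all correct, and the representation strategy works if you replace the naive strip formula by the canonical Nevanlinna factorization in $\Cp$: write $f=B\,e^{\ii\beta z}S_\mu O$ and estimate at $z=x+\ii/2$ each nonpositive contribution separately, using the Blaschke condition $\sum\Im\lambda_k/(1+|\lambda_k|^2)<\infty$ together with $\Im\lambda_k\ge1$ (from the zero-free strip), $\int d\mu(t)/(1+t^2)<\infty$, and $\int\log^-|f(t)|\,dt/(1+t^2)<\infty$; your split at $|t-x|\le\delta x$ then gives $o(x^2)$ for each term. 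Note also that the paper's proof is much shorter and avoids any representation formula: assuming a dip $\log|f(x_n+\ii/2)|\le -c x_n^2$, Harnack's inequality for the positive harmonic function $\log(1/|f|)$ in the strip spreads the dip over an interval of fixed length around $x_n$, forcing $\int\log^-|f(y+\ii/2)|\,dy/(1+y^2)=+\infty$, which contradicts the uniqueness theorem for $H^\infty(\Cp)$ applied to $f(\cdot+\ii/2)$.
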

\begin{proof}
Consider the function
$$
F(z) = \frac{f(z)e^{i\alpha z}}{C(z + \ii)^M}
$$
which has the same zeros as $f(z)$. Clearly, $F(z)$ is bounded in 
$\Cp$ and we reduce the proof to the case $|f(z)|<1$ for $z\in\Cp$. 
In the strip $\{z\in\C:0<\Im z<1\}$ consider the positive harmonic  function $G(z)=\log(1/|f(z)|)$. Assume that for some $x>1$ we have $G(x + \ii/2)\ge c x^2,\: c>0.$
By Harnack inequality we get
$$
G(x+t+\ii/2)\ge c_1 c x^2,\quad -\frac14\le t\le\frac14,\, c_1>0.
$$

Thus with a constant $c_2>0$ we deduce
$$
\int_{x - 1/4}^{x + 1/4} \log \frac{1}{|f(y + \ii/2)|}\frac{dy}{1 + y^2} 
\ge c_2 c > 0.
$$
If we have
$$
\liminf_{|x|\to\infty} \frac{\log |f(x + \ii/2)|}{x^2} = -c < 0,
$$
then we can find a sequence of points $x_n \in \R$, 
$|x_{n+1}|>|x_n|+1$, $n\ge 0$ so that
$$
\int_{x_n - 1/4}^{x_n + 1/4} \log \frac{1}{|f(y + \ii/2)|}\frac{dy}{1 + y^2} \ge c_2 c > 0
$$
and then
$$
\int_{-\infty}^{+\infty}\log\frac{1}{|f(y+\ii/2)|}\frac{dy}{1+y^2}=+\infty.
$$
This contradicts the standard uniqueness theorem for functions in 
$H^{\infty}(\Cp)$ (see for instance \cite{R}, Chapter 17) and we obtain the result.
\end{proof}

\begin{rem} The assertion of Proposition 2.1 holds for holomorphic functions $f(z)$ in $\Cp$ for which we have $f(z) \neq 0$ for $0 < \Im z < 1$ and
\begin{gather*}
|f(x)|\le C(1+|x|)^M,\quad \forall x \in \R,\\
|f(z)|\le Ce^{\alpha|z|},\quad \alpha \ge 0,\: \forall z \in \overline{\Cp}.
\end{gather*}

\end{rem}
In fact we can consider the function 
$$
F(z) = \frac{f(z) e^{\ii \alpha z}}{(z + \ii)^M}
$$
and apply the Phragm\'en-Lindel\"of principle in the first and the the second quadrant of $\C$ to conclude that $F(z)$ is bounded in $\overline{\Cp}$.\\

To verify that the result of Proposition 2.1 is rather sharp, we establish the following
\begin{prop} Let $\rho(x)$ be a positive function such that 
$\lim_{x \to \infty} \rho(x) = 0$. Then there exists a Blaschke product $B(z)$ in $\Cp$ without zeros in the domain 
$\{z \in \C:\: 0 < \Im z < 1\}$ such that
$$
\liminf_{x \to \infty} \frac{\log |B(x + \ii/2)|}{\rho(x) x^2} < 0.
$$
\end{prop}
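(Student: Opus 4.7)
The plan is to construct $B$ explicitly as a Blaschke product whose zeros are placed on the line $\Im z = 1$, which lies on the boundary (and thus outside) of the forbidden strip $\{0 < \Im z < 1\}$. Using $\rho(x)\to 0$, I would inductively select a sequence $x_k\to\infty$ with $\rho(x_k)\le 2^{-k}$ and with $x_k$ growing fast enough that $\sum_k x_k^{-2}<\infty$. Setting $N_k=\lceil\rho(x_k)x_k^2\rceil$, let $B$ be the Blaschke product in $\Cp$ having a zero of multiplicity $N_k$ at $z_k=x_k+\ii$. The Blaschke summability condition is verified by
\[
\sum_k\frac{N_k\,\Im z_k}{1+|z_k|^2}=\sum_k\frac{N_k}{2+x_k^2}\le 2\sum_k\rho(x_k)+\sum_k\frac{1}{x_k^2}<\infty,
\]
so $B$ defines a bounded holomorphic function in $\Cp$ whose zero set misses $\{0<\Im z<1\}$.

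To estimate $|B|$ at $w_k=x_k+\ii/2$, I observe that the single Blaschke factor corresponding to $z_k$ has modulus exactly $1/3$ at $w_k$ (since $|w_k-z_k|=1/2$ and $|w_k-\bar z_k|=3/2$), while every other factor in the product has modulus at most $1$ throughout $\Cp$. Multiplying therefore gives
\[
|B(w_k)|\le 3^{-N_k}\le 3^{-\rho(x_k)x_k^2}.
\]
Taking logarithms, dividing by $\rho(x_k)x_k^2>0$, and passing to the liminf along the sequence $\{w_k\}$ yields
\[
\liminf_{x\to\infty}\frac{\log|B(x+\ii/2)|}{\rho(x)x^2}\le -\log 3<0,
\]
as required.

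The only step requiring any real care is the choice of the sequence $\{x_k\}$: one needs $\rho(x_k)$ summable (automatic because $\rho\to 0$) and $x_k$ growing fast enough to compensate for the rounding in $N_k$, both straightforward to arrange. There is no serious estimation obstacle because the \emph{off-diagonal} Blaschke factors only contribute non-positively to $\log|B|$ in $\Cp$ and therefore help rather than harm the desired inequality; the whole construction exploits that we are free to concentrate many zeros at a single point on the boundary line $\Im z=1$, which is exactly what forces the optimality phenomenon alluded to in the introduction about multiplicities $m(z_k)\to\infty$.
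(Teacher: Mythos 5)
Your construction is correct and is essentially the paper's own proof: zeros of multiplicity about $\rho(x_k)x_k^2$ placed at $x_k+\ii$ with $x_k$ growing fast enough for the Blaschke condition, and the lower-bound violation at $x_k+\ii/2$ coming from the single factor of modulus $1/3$ raised to that multiplicity. The only cosmetic difference is that you fix explicit choices ($\rho(x_k)\le 2^{-k}$, $N_k=\lceil\rho(x_k)x_k^2\rceil$) where the paper just postulates $k_n\ge\rho(x_n)x_n^2$ with $\sum_n k_n/x_n^2<\infty$.
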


\begin{proof} We choose two sequences $x_n\to\infty$, $x_n\ge1$ 
and $k_n \in \N$, $n\ge 1$ so that
\begin{gather} 
\label{eq:2.2}
k_n \ge \rho(x_n) x_n^2,\quad n\ge 1,\\ 
\label{eq:2.3}
\sum_{n \ge1} \frac{k_n}{x_n^2} < \infty.
\end{gather}
Next we set $z_n = x_n + \ii$, $n\ge1$ and consider
$$
B(z)=\prod_{n\ge1}\Bigl(\frac{|z_n^2+1|}{z_n^2+1}\cdot 
\frac{z-z_n}{z-\bar{z}_n}\Bigr)^{k_n}.
$$
The condition (\ref{eq:2.3}) guarantees the convergence of the infinite product. On the other hand, using (\ref{eq:2.2}) we get
$$
|B(x_n+\ii/2)|\le\Bigl|\frac{(x_n+\ii/2)-
(x_n+\ii)}{(x_n+\ii/2)-(x_n-\ii)}\Bigr|^{k_n}=3^{-k_n}<
e^{-\rho(x_n) x_n^2}.
$$
\end{proof}

Now we pass to the analysis of functions $f(z)$ holomorphic in $\Cp$ 
and satisfying the growth condition
\begin{equation} 
\label{eq:2.4}
|f(z)|\le C e^{|z|^{\beta}},\quad 1 <\beta < 2, \: z \in \Cp.
\end{equation}

\begin{prop} Let $1<\beta<2$, and let $f(z)$ be a function holomorphic in $\Cp$ and continuous in $\overline{\Cp}$ satisfying 
$(\ref{eq:2.4})$ and such that $f(x+\ii y)\neq 0$ for $0<y<1$. Then
\begin{equation} 
\label{eq:2.4a}
\liminf _{|x| \to \infty} \frac{\log |f(x + \ii/2)|}{x^{\beta + 1}} > - \infty.
\end{equation}
\end{prop}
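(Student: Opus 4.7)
The plan is to carry out a Harnack-plus-uniqueness argument in the spirit of Proposition 2.1, with an extra ingredient absorbing the $|z|^{\beta}$ growth. Arguing by contradiction, I assume the liminf is $-\infty$ and extract a sequence $|x_n|\to+\infty$ with $|x_{n+1}|>|x_n|+1$ and
$$\log|f(x_n+\ii/2)|\le -c_n|x_n|^{\beta+1},\qquad c_n\to +\infty.$$

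In the strip $\Pi=\{0<\Im z<1\}$, the function $G(z):=\log|f(z)|$ is harmonic (since $f\neq 0$ in $\Pi$), and $(\ref{eq:2.4})$ gives $G(z)\le|z|^{\beta}+\log C$. I construct a harmonic majorant $M$ in $\Pi$ as the Poisson integral, relative to the kernels $P_0,P_1$ of the strip (with $P_0(x-t,1/2)=P_1(x-t,1/2)=(2\cosh\pi(x-t))^{-1}$), of the boundary data $(1+|t|^{2})^{\beta/2}+\log C$ on each side; a direct estimate yields $M(x+\ii/2)=O(|x|^{\beta})$ and $M(z)=O(|z|^{\beta})$ throughout $\Pi$. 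Since $G-M\le 0$ on $\partial\Pi$ and $G-M=o(e^{\pi|\Re z|})$ in $\Pi$, the Phragm\'en-Lindel\"of principle in the strip forces $G\le M$ in $\Pi$, so that $H:=M-G\ge 0$ is harmonic with $H(x_n+\ii/2)\ge c_n|x_n|^{\beta+1}/2$ for $n$ large. Harnack's inequality on a disk of radius $1/4$ about $x_n+\ii/2$ (contained in $\Pi$) then produces a universal $c_H>0$ with
$$-\log|f(y+\ii/2)|\ge c_H c_n|x_n|^{\beta+1}/4,\qquad |y-x_n|\le 1/4.$$

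To close the argument I appeal to the Poisson representation of the nonnegative harmonic function $H$ in $\Pi$,
\[
H(x+\ii y)=\int_{\R}P_0(x-t,y)d\mu_0(t)+\int_{\R}P_1(x-t,y)d\mu_1(t)+a\sin(\pi y)e^{\pi x}+b\sin(\pi y)e^{-\pi x},
\]
with $a,b\ge 0$ and nonnegative measures $\mu_0,\mu_1$. A positive $a$ (resp.\ $b$) would force $|f(x+\ii/2)|\le\exp(-c e^{\pi x})$ as $x\to+\infty$ (resp.\ $-\infty$), and the conformal change $w=e^{\pi z}$ (sending $\Pi$ to $\Cp$ and $\R+\ii/2$ to the positive imaginary axis) turns $f$ into a function $F(w):=f(\log w/\pi)$ holomorphic in $\Cp$ of sub-polynomial growth $\log|F(w)|\le C(\log|w|)^\beta$; the bound $|F(iR)|\le e^{-cR}$ together with this sub-polynomial growth forces $F\equiv 0$ by a classical Phragm\'en-Lindel\"of argument (exponential decay on a ray dominates any sub-exponential growth), contradicting $f\not\equiv 0$. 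With $a=b=0$, a Nevanlinna/Cartwright-type boundary integrability theorem for functions of order $\beta<2$ in the half-plane gives $\int_{\R}H(y+\ii/2)\,dy/(1+|y|^{\beta+1+\e})<+\infty$ for any $\e>0$, while the Harnack intervals force this integral to dominate $\sum_n c_H c_n/(4|x_n|^{\e})$; passing to a subsequence along which $c_n/|x_n|^{\e}\to+\infty$ yields the contradiction.

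The main obstacle is the sharp boundary integrability theorem adapted to the growth $|f(z)|\le Ce^{|z|^{\beta}}$ with $1<\beta<2$: once this Carleman/Nevanlinna-type estimate and the exclusion of the exponential terms in the strip representation are established, the Harnack-plus-summation mechanism of Proposition~2.1 applies with the modified weight $(1+|y|^{\beta+1+\e})^{-1}$ in place of $(1+y^{2})^{-1}$.
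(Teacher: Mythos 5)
Your reduction to a nonnegative harmonic function $H=M-\log|f|$ in the strip (harmonic majorant via the strip Poisson kernel, Phragm\'en--Lindel\"of, Harnack on disks of radius $1/4$) is sound and parallels the paper's first step, but the two ingredients you then rely on contain genuine gaps. First, your exclusion of the exponential terms $a e^{\pi x}\sin\pi y$, $b e^{-\pi x}\sin\pi y$ rests on a false principle: a function holomorphic in $\Cp$ with $\log|F(w)|\le C(\log|w|)^{\beta}$ can perfectly well decay exponentially on the imaginary axis without vanishing --- take $F(w)=e^{\ii c w}$, which is bounded in $\overline{\Cp}$ and has $|F(\ii R)|=e^{-cR}$. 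Equivalently, in the strip the zero-free bounded function $e^{\ii c e^{\pi z}}$ realizes a positive coefficient $a$. What rules out $a>0$ under the hypotheses of the proposition is only the growth bound (2.4) for $f$ \emph{above} the strip, which your argument never invokes at that point; so this exclusion requires a different (Carleman-type) argument in $\{\Im z\ge 1/2\}$, not Phragm\'en--Lindel\"of for $F$ in a half-plane.

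Second, the ``Nevanlinna/Cartwright boundary integrability theorem'' you cite as the remaining obstacle is in fact the heart of the matter: it is not a property of nonnegative harmonic functions in the strip (with $a=b=0$ the measures $\mu_0,\mu_1$ need only satisfy $\int e^{-\pi|t|}\,d\mu_j(t)<\infty$, which is compatible with $\int H(y+\ii/2)(1+|y|^{\beta+1+\varepsilon})^{-1}dy=\infty$), but a consequence of the growth of $f$ in the half-plane, i.e.\ precisely of Carleman's formula --- and that formula is essentially the paper's entire proof: applied in $\Im z\ge 1/2$ it gives $\frac1{R^2}\int_{R/3}^{2R/3}\log^-|f(x+\ii/2)|\,dx=O(R^{\beta-1})$, which contradicts the Harnack intervals directly at scale $R\asymp x_n$. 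Moreover, even granting your integrability claim with the weight $(1+|y|^{\beta+1+\varepsilon})^{-1}$, your final step does not close: the hypothesis that the liminf is $-\infty$ only provides $c_n\to\infty$ with no lower bound of $c_n$ in terms of $|x_n|^{\varepsilon}$ (one may have, say, $c_n\asymp\log\log|x_n|$), so a subsequence with $c_n/|x_n|^{\varepsilon}\to+\infty$ need not exist; as written your mechanism proves only the weaker statement $\liminf_{|x|\to\infty}\log|f(x+\ii/2)|/|x|^{\beta+1+\varepsilon}>-\infty$ for each $\varepsilon>0$, not \eqref{eq:2.4a}. The sharp exponent $\beta+1$ is exactly what the unweighted Carleman estimate delivers, which is why the paper argues that way.
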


\begin{proof} As in the proof of Proposition 2.1, we assume that  
(\ref{eq:2.4a}) does not hold, and obtain that there exists a sequence 
$t_n\to\infty$ such that
\begin{equation} 
\label{eq:2.5}
\log|f(x+\ii/2)|\le-nt_n^{\beta+1},\quad t_n-1/4\le x\le t_n+1/4.
\end{equation}
Now we apply the Carleman formula (see for instance \cite{T}) in the half plane $\Im z \ge1/2$ which yields
\begin{gather*} 
{\mathcal O}(1) \le \frac{1}{\pi R}\int_0^{\pi} \log |f(Re^{i\theta}+\ii /2)| \sin \theta \,d\theta
\\+ \frac{1}{2\pi}\int_1^R \Bigl(\frac{1}{x^2} - \frac{1}{R^2}\Bigr) \log|f(x + \ii/2)f(-x + \ii/2)|\,dx, \qquad R \to \infty.
\end{gather*} 
Therefore, using the notation $\log a=\log^+a-\log^-a$, we obtain
\begin{gather*}
\frac{1}{2\pi}\int_1^R\Bigl(\frac 1{x^2}-\frac{1}{R^2}\Bigr)
\log^-|f(x+\ii/2)f(-x+\ii/2)|\,dx\\ 
\le O(R^{\beta-1})+\frac{1}{2\pi}\int_1^R \frac {\log^+|f(x+\ii/2)f(-x+\ii/2)|}{x^2}\,dx\\
\le O(R^{\beta-1})+C \int_1^R \frac {x^\beta}{x^2}\,dx=
O(R^{\beta-1}), \qquad R \to \infty,
\end{gather*}
and, hence,
$$
\frac{1}{R^2}\int_{R/3}^{2R/3}\log^-|f(x + \ii/2)|\,dx=
O(R^{\beta-1}),\qquad R\to\infty.
$$
This contradicts (\ref{eq:2.5}) for $R = 2 t_n$, $n\to\infty$, 
which completes the proof.
\end{proof}

The following proposition shows how sharp is our lower bound.

\begin{prop} Let $1<\beta<2$, and let $\rho(x)$ be a positive function
such that $\lim_{x\to\infty}\rho(x)=0$. 
Then there exist functions $f$ and $F$ holomorphic in $\Cp$ and continuous in $\overline{\Cp}$ such that $f(x+\ii y)\neq 0$, 
$F(x+\ii y)\neq 0$ for $0<y<1$,
$|f(x+\ii y)|\le c\exp(C y^\beta)$, $x+\ii y\in \overline{\Cp}$, 
$|F(z)|\le c\exp(C |z|^\beta)$, $z\in \overline{\Cp}$, 
$|F(x)|=1$, $x\in\mathbb R$
satisfying the inequalities
$$
\liminf_{x\to+\infty}\frac{\log|f(x+\ii/2)|}{\rho(x)x^{\beta+1}}<0,
\qquad
\liminf_{x\to+\infty}\frac{\log|F(x+\ii/2)|}{\rho(x)x^{\beta+1}}<0.
$$
\end{prop}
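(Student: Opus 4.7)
The plan is to imitate the Blaschke-product construction of Proposition~2.3, but now with zeros $z_n=x_n+\ii$ of multiplicity $k_n\asymp\rho(x_n)\,x_n^{\beta+1}$ along a very sparse sequence $x_n\to\infty$, so that the $n$-th factor evaluated at the test point $x_n+\ii/2$ contributes $3^{-k_n}\le e^{-c\rho(x_n)x_n^{\beta+1}}$. The new difficulty, absent in Proposition~2.3, is that for $\beta>1$ the Blaschke summability $\sum k_n/x_n^2 \asymp \sum\rho(x_n)\,x_n^{\beta-1}$ fails, so a plain Blaschke product over $\{z_n\}$ is identically zero; compensating factors are needed, and this is where the allowed growth $\exp(Cy^\beta)$ (respectively $\exp(C|z|^\beta)$) is exploited.

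The steps I would take are as follows. First, fix $x_n$ sparse (say $x_{n+1}\ge x_n^{10}$) and $k_n=\lceil\rho(x_n)\,x_n^{\beta+1}\rceil$, so that $\sum k_n/x_n^{3}<\infty$. Second, for $f$ form the generalized canonical product
$$
f(z)=\prod_n\Bigl(\frac{z-z_n}{z-\bar z_n}\Bigr)^{k_n}\exp\bigl(k_n Q_n(z)\bigr),
$$
where $Q_n$ is a polynomial in $1/(z-\bar z_n)$ of degree $\lceil\beta\rceil=2$ cancelling the first two terms of the Laurent expansion of $\log\bigl((z-z_n)/(z-\bar z_n)\bigr)$ around the zero -- the half-plane analogue of a Weierstrass genus-$\lceil\beta\rceil$ factor. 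Third, use the bound $\log b_n(z)+Q_n(z)=O(1/|z-\bar z_n|^{3})$ together with the sparsity of $\{x_n\}$ to verify convergence of the product and the bound $|f(x+\ii y)|\le c\exp(Cy^\beta)$ on $\overline{\Cp}$. Fourth, a mild re-centering of each $Q_n$ (so it vanishes at $x_n+\ii/2$, with a complementary error bounded by $O(1/x_n^{\beta+2})$ elsewhere) ensures the compensator does not destroy the smallness at the test point, giving $|f(x_n+\ii/2)|\le c\cdot 3^{-k_n}$ and hence the required $\liminf$-inequality. For $F$, I would keep the unimodular Blaschke factors and instead absorb the compensators into a single multiplier $e^{\ii H(z)}$, where $H$ is an entire function of order $\beta$ that is real on $\R$ (hence $|e^{\ii H(x)}|=1$) and satisfies $-\Im H(z)\le C|z|^\beta$ on $\overline{\Cp}$; alternatively, once $f$ is extended meromorphically to $\C$, the choice $F(z)=f(z)/\overline{f(\bar z)}$ is automatically of modulus one on $\R$ and doubles the zero orders at $z_n$, which only strengthens the lower-bound violation.

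The main technical obstacle is the balancing act for the compensators $Q_n$ (respectively $H$): they must (a) tame the divergent Blaschke sum, (b) respect the growth ceiling on $\overline{\Cp}$, and (c) stay small at the test points $x_n+\ii/2$ so as not to cancel the $3^{-k_n}$ smallness coming from the Blaschke factor. The last two requirements pull in opposite directions, since natural compensators designed to enforce convergence tend to be largest precisely near the zeros $z_n$. Meeting all three constraints simultaneously -- and in the case of $F$, meeting them while keeping the compensator unimodular on $\R$ -- is the step where I would expect most of the work to lie.
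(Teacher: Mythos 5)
Your overall architecture is the same as the paper's: zeros at $x_n+\ii$ of multiplicity $k_n\approx\rho(x_n)x_n^{\beta+1}$ over a sparse sequence, a compensated Blaschke-type product, and evaluation at $x_n+\ii/2$ where the $n$-th factor contributes $3^{-k_n}$. But the compensators you choose are the wrong device, and the difficulty you flag at the end is not resolvable with them. Writing $b_n(z)=\frac{z-z_n}{z-\bar z_n}=1-\zeta$ with $\zeta=\frac{2\ii}{z-\bar z_n}$, your $Q_n=\zeta+\zeta^2/2$ has $\Re Q_n(x)=\frac{2}{(x-x_n)^2+1}+\frac{2\,(1-(x-x_n)^2)}{((x-x_n)^2+1)^2}$ on $\R$, which equals $4$ at $x=x_n$ and is of order $1$ on a whole unit neighbourhood of $x_n$. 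Hence the $n$-th factor has modulus $e^{4k_n}$ at $x=x_n$, so $|F(x)|=1$ fails, $|f(x+\ii y)|\le c\exp(Cy^\beta)$ fails at $y=0$, and since one may assume $x\rho(x)\to\infty$ we have $k_n\gg x_n^{\beta}$, so even the weaker bound $|F(z)|\le c\exp(C|z|^\beta)$ is violated. Your ``re-centering'' cannot repair this: subtracting the constant $Q_n(x_n+\ii/2)\approx 20/9$ still leaves $\Re\approx 16/9$ at $x_n$ (the error is a constant of order $1$, not $O(x_n^{-\beta-2})$), and it simultaneously destroys convergence of the product, since each modified factor no longer tends to $1$ as $|z-\bar z_n|\to\infty$. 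The structural point you are missing is that a compensator which keeps modulus $1$ on $\R$ must be of the form $\exp(\ii p(z))$ with $p$ real on $\R$; this is exactly what the paper uses (Govorov's canonical factors for the half-plane): $B(s,z)=b(s,z)\exp[-2\ii z/(s^2+1)]$, whose exponent is purely imaginary on $\R$, has $\Re\le 2\Im z/(s^2+1)$ in $\Cp$, and cancels the \emph{linear Taylor term at $z=0$} (not a Laurent term at $\infty$), giving the remainder $O(|z|^2/s^3)$ for $|z|\le s/2$; since $\beta<2$, genus one suffices once $\sum_n k_n/t_n^{\beta+1}<\infty$.

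There is a second gap, independent of the first: even with the correct compensators the product only satisfies $\log|F(z)|\le C|z|^\beta$, never a bound depending on $\Im z$ alone, because near the real axis the tail factors contribute roughly $\Im z\sum_{t_n\le 2|z|}k_n/t_n^2\asymp \rho(|z|)|z|^{\beta-1}\Im z$, which is unbounded for fixed $\Im z$. Your plan produces only one function (the product) and offers no mechanism for the stronger requirement $|f(x+\ii y)|\le c\exp(Cy^\beta)$. The paper gets $f$ by multiplying $F$ by the zero-free outer-type factor $G(z)=\exp\bigl(C\exp(\beta\log(z+\ii))\bigr)$ with the branch chosen so that the exponent is negative of size $\asymp|z|^\beta$ near the real axis (this uses $\cos(\beta\pi/2)<0$ for $1<\beta<2$) and comparable to $y^\beta$ near the imaginary axis; this kills the $|z|^\beta$ growth along $\R$ without affecting the zero set or the smallness at $t_n+\ii/2$. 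Your alternative suggestion $F(z)=f(z)/\overline{f(\bar z)}$ also needs a lower bound on $|f|$ in the reflected points, which you do not provide. So both the compensator step and the passage from the $|z|^\beta$ bound to the $y^\beta$ bound need to be redone along the paper's lines.
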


\begin{proof} Without loss of generality we assume that
$\lim_{x\to\infty}x\rho(x)=+\infty$.
Given $s \in \mathbb R$, consider the function
$$
B(s,z)=\frac{z-s-\ii}{z-s+\ii}\cdot \frac{s-\ii}{s+\ii}\cdot 
\exp\Bigl[-\frac{2\ii z}{s^2+1}\Bigr].
$$
Then $B(s,x+\ii y)\not=0$, $0<y<1$, $|B(s,x)|=1$, and 
$|B(s,s+\ii/2)|\le c<1/e$ for large $s$.
Next we use two estimates on $B(s,\cdot)$ (see \cite[Chapter 1]{GO}):
\begin{gather}
|\log B(s,z)|=\Bigl| \log\Bigl[\bigl(1-\frac{z}{s+\ii}\bigr)e^{z/(s+\ii)}\Bigr]
-\log\Bigl[\bigl(1-\frac{z}{s-\ii}\bigr)e^{z/(s-\ii)}\Bigr]\Bigr|
\notag\\
=\Bigl| \sum_{m\ge 2}\frac 1m \Bigl[\Bigl(\frac{z}{s+\ii}\Bigr)^m
-\Bigl(\frac{z}{s-\ii}\Bigr)^m\Bigr]
\Bigr|\le C\frac{|z|^2}{s^3},
\qquad |z|\le s/2,\: |s| > 1,
\label{eq:2.7}
\end{gather}
and
\begin{equation}
\log |B(s,z)|\le \Re\frac{-2\ii z}{s^2+1}\le 2\frac{\Im z}{s^2+1}.
\label{xt}
\end{equation}

Choose $t_n\to\infty$, $t_n\ge 1$, and $k_n\ge 1$ such that
\begin{gather}
k_n\ge \rho(t_n)t_n^{\beta+1},\notag\\
\sum_{n\ge 1} \frac{k_n}{t_n^{\beta+1}}<\infty,
\label{eq:2.8}
\end{gather}
and consider
$$F(z)=\prod_{n\ge 1}B^{k_n}(t_n,z).$$
The product converges because of (\ref{eq:2.7}) and (\ref{eq:2.8}). Furthermore, by (\ref{eq:2.7}) and \eqref{xt},
\begin{gather*}
\log|F(z)|\le \sum_{|z|\ge |t_n|/2}k_n\log|B(t_n,z)|+
\sum_{|z|< |t_n|/2}k_n\log|B(t_n,z)| \\
\le C\sum_{|z|\ge |t_n|/2}k_n\frac{|z|}{t^2_n}+
C\sum_{|z|< |t_n|/2}k_n\frac{|z|^2}{t^3_n}.
\end{gather*}
According to (\ref{eq:2.8}), we obtain
$$
\log|F(z)|\le C|z|^\beta.
$$
Finally, for large $n$,
$$
\log|F(t_n+\ii/2)|\le C|t_n|^\beta+k_n\log |B(t_n,t_n+\ii/2)|\le
-\rho(t_n)t_n^{\beta+1}.
$$

Multiplying $F(z)$ by $G(z)=\exp(C\exp(\beta\log (z+i)))$ 
with the branch of the logarithm in the upper half plane 
positive on the imaginary semi-axis
and a suitable $C> 0$, we obtain that the function $f=FG$
satisfies the conditions of our proposition.
\end{proof}

In the above examples the multiplicities of the zeros are not bounded. Motivated by physical examples we would like to examine the situation when the multiplicity of the zeros is bounded, and in addition the zeros satisfy some separation conditions. 
\def\ka{\kappa}
 
\begin{prop} Let $f(z)$ be a function holomorphic in $\Cp$ 
with zeros of bounded multiplicities, such that 
$\log(1/|f(x)|)={\mathcal O}(x)$, $|x| \to\infty$, $x \in \R$.
Assume that for some constants $\alpha\ge 0$, $C>0$, $M\ge 0$ we have
$$ 
|f(z)| \le C(1 + |z|)^M e^{\alpha \Im z},\quad z\in\overline{\Cp}.
$$ 
Moreover, suppose that there exists $k>0$ such that the set of the zeros $\Lambda$ of $f$ in $\Cp$ satisfies the following conditions:
$$
\Im \lambda\ge 1,\qquad \lambda\in\Lambda;
$$
if $\lambda,\mu\in \Lambda$, $\lambda\not=\mu$, and if
$\Im \lambda \le k|\Re \lambda|$, $\Im \mu \le k |\Re \mu|$, then
\begin{equation}
|\lambda-\mu|\ge c(|\lambda|+|\mu|)^{-1/4}. \label{eq:2.9}
\end{equation}
In this situation
$$
-\log|f(x+\ii /2)|= {\mathcal O}(x),\qquad |x| \to \infty.
$$
\end{prop}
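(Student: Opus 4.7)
The plan is to factor $f = B\cdot G$, where $B$ is the Blaschke product in $\Cp$ for the zeros of $f$ (with multiplicities $m_\lambda\le m_0$) and $G = f/B$ is zero-free in $\Cp$. Since $|B|\equiv 1$ on $\R$, $|G(t)|=|f(t)|\ge e^{-C|t|}$ there, and the Poisson representation for the outer part of $\log|G|$ at height $\Im z=1/2$ gives $\log|G(x+i/2)|\ge -C_1(1+|x|)$. A side check is needed to bound any singular-inner-factor contribution to $G$ by $O(|x|)$ (which should follow from the polynomial-exponential growth of $f$). It then suffices to show $-\log|B(x+i/2)|\le C_2|x|$. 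Expanding $B$ and using $\log(1+t)\le t$ together with $(v_\lambda-1/2)^2\ge v_\lambda^2/4$ (valid for $v_\lambda\ge 1$) yields
$$
-\log|B(x+i/2)| \le C_3\sum_{\lambda\in\Lambda}\frac{v_\lambda}{(x-u_\lambda)^2+v_\lambda^2},
$$
with $\lambda=u_\lambda+iv_\lambda$. Because $f(z)e^{i\alpha z}/(z+i)^M$ is bounded in $\Cp$, the Blaschke condition $\sum v_\lambda/(1+|\lambda|^2)<\infty$ holds, and the far-field zeros $|\lambda|\ge 2|x|$ contribute $O(1)$. Splitting the remaining zeros into the cone part $\Lambda_1=\{v_\lambda\le k|u_\lambda|\}$ and its complement $\Lambda_2$, the latter satisfies $v_\lambda\ge c|\lambda|$, hence $\sum_{\Lambda_2}1/|\lambda|<\infty$, yielding another $O(1)$ contribution.

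The core estimate is for $\Lambda_1\cap\{|\lambda|\le 2|x|\}$, which I group into dyadic levels $v_\lambda\in[V,2V]$ with $V=2^j$. At level $V$ two independent count bounds are available. First, the separation $|\lambda-\mu|\ge c(|\lambda|+|\mu|)^{-1/4}$ caps the local area density of cone zeros by $O(|x|^{1/2})$, hence the horizontal density per unit $u$ at level $V$ by $O(V|x|^{1/2})$; integrating the Poisson kernel $V/((x-u)^2+V^2)$ against this density over all of $\R$ bounds the sum at level $V$ by $O(V|x|^{1/2})$. Second, the Blaschke condition caps the total count at level $V$ by $N_V=O(|x|^2/V)$, and combined with the trivial inequality $v/((x-u)^2+v^2)\le 1/v$ it bounds the sum at level $V$ by $O(|x|^2/V^2)$. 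The separation estimate is tighter for $V\lesssim|x|^{1/2}$ and the Blaschke estimate for $V\gtrsim|x|^{1/2}$; both equal $O(|x|)$ at the crossover $V\simeq|x|^{1/2}$, and the geometric dyadic sums on either side each contribute $O(|x|)$, giving the required bound. The principal obstacle is the careful bookkeeping in the two-sided dyadic estimate near the crossover, together with the side-argument for the singular inner factor of $G$ in the initial factorization.
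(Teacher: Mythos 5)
Your proposal is correct and takes essentially the same route as the paper: normalize, use the canonical factorization, bound the outer part from below via the Poisson integral of $\log|f|$ on $\R$ (splitting $|t|\le 2|x|$ from the tail controlled by $\int\log^-|f(t)|(1+t^2)^{-1}dt<\infty$), and estimate the Blaschke sum $\sum_\lambda \Im\lambda/((x-\Re\lambda)^2+(\Im\lambda)^2)$ by treating far and non-cone zeros with the Blaschke condition and the near cone zeros with a dyadic decomposition that plays the separation-based area count ($O(|x|^{1/2})$ zeros per unit area) against the Blaschke condition, with crossover at scale $|x|^{1/2}$ — the only difference being that you stratify by $\Im\lambda$ while the paper stratifies by $|x-\lambda|$, which is cosmetic. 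One small correction: the triviality of the singular inner factor (beyond $e^{\ii a z}$) is not a consequence of the growth of $f$ from above, but of the pointwise lower bound $\log(1/|f(x)|)={\mathcal O}(x)$ on $\R$ (with continuity up to the boundary), which forces the singular measure on $\R$ to vanish — a point the paper itself passes over silently.
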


\begin{proof} As above we reduce the proof to the case $|f(z)|\le 1$ for $z \in \overline{\Cp}$. From now on, for simplicity, we suppose that $x \ge 1$.

Using the Nevanlinna factorization (\cite{R}, Chapter 17), we represent $f$ as the product
$$
f(z)=e^{\ii a z}B(z)F(z),
$$
where $B$ is the Blaschke product constructed by $\Lambda$,
and $F$ is the outer function determined by the condition 
$|F|=|f|$ on $\R$. Then $|e^{\ii a z}|=e^{-a/2}$, 
$z\in \R+ \ii/2$, and we have 
\begin{gather*}
\log|F(x+\ii /2)|=\frac 1{2\pi}\int_{-\infty}^{\infty}\frac{\log |f(t)|}
{(x-t)^2+1/4}\,dt\\
= \frac 1{2\pi}\int_{-2x}^{2x}\frac{\log |f(t)|}
{(x-t)^2+1/4}\,dt+ \frac 1{2\pi}\int_{\mathbb R\setminus(-2x,2x)}
\frac{\log |f(t)|}{(x-t)^2+1/4}\,dt\\ 
\ge \frac 1{2\pi}\int_{-2x}^{2x}\frac{-cx}
{(x-t)^2+1/4}\,dt- \frac 1{2\pi}\int_{\mathbb R\setminus(-2x,2x)}
\frac{t^2+1}{(x-t)^2+1/4}\cdot\frac{\log^- |f(t)|}{t^2+1}\,dt\\ 
\ge -cx-c_1,\qquad x\to\infty,
\end{gather*}
since
$$
\int_{\R} \frac{\log^-|f(t)|}{t^2+1}\, dt < +\infty,\qquad f \in H^{\infty}(\Cp).
$$
\bigskip

It remains to estimate $|B|$. The Blaschke condition tells us that
$$
\sum_{\lambda\in\Lambda}\frac{\Im \lambda}{1+ |\lambda|^2}\le 
c_0<\infty.
$$
Furthermore,
$$
\log|B(x+\ii/2)|=\sum_{\lambda\in\Lambda} \log
\Bigl|\frac{\lambda-x-\ii/2}{\lambda-x+\ii/2} \Bigr|
=\frac 12\sum_{\lambda\in\Lambda} \log
\Bigl|\frac{\lambda-x-\ii/2}{\lambda-x+\ii/2} \Bigr|^2.
$$
Since $\Im \lambda\ge 1$, $\lambda\in\Lambda$, and 
$\log a\asymp 1-a$, $1/9\le a<1$, we have
\begin{gather*}
\log|B(x+\ii/2)|
  \asymp \sum_{\lambda\in\Lambda}
\Bigl[1-\frac{(x-\Re \lambda)^2+(\Im \lambda-1/2)^2}
{(x-\Re \lambda)^2+(\Im \lambda+1/2)^2} \Bigr]
\\
=\sum_{\lambda\in\Lambda}
\frac{\Im \lambda}
{(x-\Re \lambda)^2+(\Im \lambda+1/2)^2} 
\asymp \sum_{\lambda\in\Lambda}
\frac{\Im \lambda}
{(x-\Re \lambda)^2+(\Im \lambda)^2}.
\end{gather*}

Let $m = 1 + \frac{1}{k}.$ First of all,
$$
\sum_{\lambda\in\Lambda,\, |x-\lambda|\ge x/m}
\frac{\Im \lambda}
{(x-\Re \lambda)^2+(\Im \lambda)^2}\le c_2
\sum_{\lambda\in\Lambda}
\frac{\Im \lambda}
{1+|\lambda|^2}\le c_3.
$$
It remains to estimate the sum 
$$
\sum_{\lambda\in\Lambda_*}
\frac{\Im \lambda}{(x-\Re \lambda)^2+(\Im \lambda)^2}
$$
where
$$
\Lambda_*=\{\lambda\in\Lambda:|x-\lambda|< x/m\} \subset \{\lambda \in \C: 1 \leq \Im \lambda \leq k|\Re \lambda|\}.
$$

For $n\ge 1$ we set $\Lambda_n=\{\lambda\in\Lambda_*:2^{n-1}\le |x-\lambda|< 2^n\}.$
Estimating the area of the domain $\{w:\Im w\ge 0,\, |x-w|< 2^n+1\}$
and using the separation condition \eqref{eq:2.9}, we obtain
\begin{equation} \label{eq:2.10}
{\rm card}\: \Lambda_n\le C_1\cdot 2^{2n}x^{1/2}.
\end{equation}
Furthermore,
$$
(x-\Re \lambda)^2+(\Im \lambda)^2\asymp 2^{2n},\qquad 
\lambda\in\Lambda_n.
$$
We set
$$A_n=\sum_{\lambda\in\Lambda_n} \Im \lambda,\:
B_n=\sum_{\lambda\in\Lambda_n}
\frac{\Im \lambda} {(x-\Re \lambda)^2+(\Im \lambda)^2}\asymp A_n2^{-2n}.$$
Since $\Im \lambda\le 2^n$, $\lambda\in \Lambda_n$, we have
\begin{equation}
A_n\le C_1\cdot 2^{3n}x^{1/2}. \label{eq:2.11}
\end{equation}
Furthermore,
$$
c_0 \ge\sum_{\lambda\in\Lambda_*}\frac{\Im \lambda}{1+|\lambda|^2}\ge 
\frac{c_4}{1+x^2}\sum_{\lambda\in\Lambda_*}\Im \lambda,
$$
and, hence,
\begin{equation} \label{eq:2.12}
\sum_{n\ge 1}A_n\le C_2(1+x^2).
\end{equation}
Finally, we obtain that
\begin{gather*}
\sum_{\lambda\in\Lambda_*}
\frac{\Im \lambda}
{(x-\Re \lambda)^2+(\Im \lambda)^2}=
\sum_{n\ge 1}B_n\asymp 
\sum_{n\ge 1}A_n2^{-2n}\\=
\sum_{2^n<x^{1/2}}A_n2^{-2n}+\sum_{2^n\ge x^{1/2}}A_n2^{-2n}.
\end{gather*}
By \eqref{eq:2.11} and \eqref{eq:2.12} we conclude that the right hand part is estimated by
$$
 c_5\sum_{2^n<x^{1/2}}2^nx^{1/2}+\frac{1}{x}\sum_{2^n\ge x^{1/2}}A_n\le C_3x,\qquad x\ge 1.
$$
\end{proof}

\begin{rem} The restriction on the multiplicity of the zeros of $f$ is fulfilled in many physical examples since we know that for generic perturbations the resonances are simple (see \cite{KZ}).
\end{rem}

\begin{rem} The separation condition is used only in the estimation of the number of zeros belonging to $\Lambda_n$. Thus our argument works assuming only that \eqref{eq:2.10} holds without any restriction on the multiplicity of the zeros in 
$\{\lambda \in \C:\: \Im \lambda \le k|\Re \lambda|\}$. Moreover, 
we can improve the lower order bound of $|f(x + \ii/2)|$ if we have a stronger separation condition
$$
|\lambda-\mu|\ge d > 0,\quad 
\lambda,\mu\in \Lambda,\,\lambda\not=\mu,\,
\Im \lambda \le k|\Re \lambda|,\, \Im \mu \le k |\Re \mu|.
$$
We refer to \cite{TZ} for examples and comments concerning  separation conditions on the resonances.

\end{rem}
\section{Estimates for $(I + B(z))^{-1}$}

Let $H$ be a Hilbert space with scalar product $(\cdot,\cdot)$ and norm $\|\cdot\|$. We denote also by $\|\cdot\|$ the norms of operators in $H$ and by ${\mathcal L}(H)$ the space of bounded linear operators on $H$. Let 
$B(z): z \in \Cp \to {\mathcal L}(H)$
be an operator valued function. We will prove the following

\begin{thm} Let $B(z)$ be holomorphic in $\Cp$ and such that for some constants $\alpha\ge 0$, $C > 0$, $M \ge 0$ we have
$$
\|B(z)\| \le C(1 + |z|)^M e^{\alpha \Im z},\quad z \in \overline{\Cp}.
$$
Assume that $(I + B(z))^{-1} \in {\mathcal L}(H)$ for $0 < \Im z < 1$ and let ${\text {\it Image}}\,B(z) \subset V$, $V$ being a finite dimensional space of $H$ independent of $z\in \overline{\Cp}$. Then for every $\epsilon > 0$ we have
$$ \|(I + B(x + \ii/2))^{-1}\| \leq C_{\epsilon} e^{\epsilon |x|^2}.$$
\end{thm}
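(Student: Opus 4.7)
The plan is to reduce the operator problem to a scalar one via a finite-dimensional determinant, and then to invoke Proposition 2.1. Let $m=\dim V$, decompose $H=V\oplus V^\perp$, and let $P$ denote the orthogonal projection of $H$ onto $V$. Because $\mathrm{Image}\,B(z)\subset V$, in block form one has
$$
B(z)=\begin{pmatrix} A(z) & B'(z) \\ 0 & 0 \end{pmatrix},
$$
with $A(z):=PB(z)|_V$ an $m\times m$ matrix depending holomorphically on $z$ and $B'(z):=PB(z)|_{V^\perp}$. Thus $I+B(z)$ is invertible if and only if the $m\times m$ matrix $I_V+A(z)$ is, and the scalar function $d(z):=\det(I_V+A(z))$ is holomorphic in $\Cp$, non-vanishing for $0<\Im z<1$, and satisfies $|d(z)|\le(1+\|A(z)\|)^m\le C'(1+|z|)^{Mm}e^{m\alpha\Im z}$ on $\overline{\Cp}$.

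Applying Proposition 2.1 to $d$ (with $M,\alpha$ replaced by $Mm,m\alpha$) then yields
$$
\lim_{|x|\to\infty}\frac{\log|d(x+\ii/2)|}{x^2}=0.
$$
Combined with the continuity and non-vanishing of $d$ on the line $\R+\ii/2$, this gives $|d(x+\ii/2)|\ge c_\epsilon e^{-\epsilon x^2}$ for every $\epsilon>0$ and all $x\in\R$.

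It remains to convert this lower bound on $d$ into an upper bound on $\|(I+B(z))^{-1}\|$. Inverting the block triangular form gives
$$
(I+B(z))^{-1}=\begin{pmatrix} (I_V+A(z))^{-1} & -(I_V+A(z))^{-1}B'(z) \\ 0 & I_{V^\perp} \end{pmatrix},
$$
and Cramer's rule on the $m\times m$ matrix $I_V+A(z)$ produces the standard bound $\|(I_V+A(z))^{-1}\|\le c_m(1+\|A(z)\|)^{m-1}/|d(z)|$. Since $\|A(z)\|,\|B'(z)\|\le\|B(z)\|$, these estimates combine to
$$
\|(I+B(z))^{-1}\|\le 1+\frac{c_m'(1+\|B(z)\|)^m}{|d(z)|}.
$$
Setting $z=x+\ii/2$ and using the polynomial bound on $\|B(x+\ii/2)\|$ together with the lower bound on $|d(x+\ii/2)|$, one obtains $\|(I+B(x+\ii/2))^{-1}\|\le C_\epsilon(1+|x|)^{Mm}e^{\epsilon x^2}$; absorbing the polynomial factor into the exponential by rechoosing $\epsilon$ gives the claimed estimate.

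The decisive step, and the only one that is really not routine, is the reduction to the scalar function $d(z)$: the finite-dimensional image hypothesis turns the operator-valued problem into an essentially $m\times m$ matrix problem, after which Proposition 2.1 and Cramer's rule finish the job. I do not anticipate a substantive technical obstacle; the bookkeeping (growth of $d$, the adjugate-type bound $\|M^{-1}\|\le c_m\|M\|^{m-1}/|\det M|$ for $m\times m$ matrices, and the block-inverse norm estimate) is elementary linear algebra.
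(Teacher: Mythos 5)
Your proposal is correct and takes essentially the same route as the paper: the paper also reduces to the $m\times m$ matrix of $P B(z)|_V$ (written out in an orthonormal basis of $V$ rather than in block form), applies Proposition 2.1 to the scalar determinant $\det(I_V+PB(z)|_V)$, and recovers the operator bound via a Cramer's-rule/adjugate estimate. The block-triangular presentation and the explicit lower bound $|d(x+\ii/2)|\ge c_\epsilon e^{-\epsilon x^2}$ are just cleaner write-ups of the same argument.
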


\begin{proof} Choose an orthonormal basis $\{e_1,...,e_N\}$ in $V$ and let $H = V \oplus V^{\perp}$. Given $g \in H$ we write 
$g = g_1 + g_2$, $g_1 \in V$, $g_2 \in V^{\perp}$ and consider the equation
\begin{equation} 
\label{eq:3.1}
(I + B(z))f(z) = g_1 + g_2.
\end{equation}
Setting $f(z) = f_1(z) + f_2(z)$, with $f_1(z) \in V$, 
$f_2(z) \in V^{\perp}$, we get $f_2(z) = g_2$ and we  reduce (\ref{eq:3.1}) to
$$
f_1(z) + B(z)f_1(z) = -B(z)g_2 + g_1 = h(z).
$$
Next we have $B(z) e_j = \sum_{k=1}^N (B(z)e_j, e_k) e_k,\: j =1,\ldots,N $
and we search $f_1(z)$ in the form $f_1(z) = \sum_{k = 1}^N c_k(z) e_k.$ For the functions $c_k(z)$ we get a linear
system
$$
c_k(z) + \sum_{j =1}^N c_j(z) (B(z) e_j, e_k) = (h(z), e_k) = h_k(z),
\quad k = 1,\ldots,N.
$$
Introduce the $(N \times N)$ matrix $A(z)$ with the elements 
$a_{i,j}(z)= (B(z)e_j, e_i)$, $i,j=1,\ldots,N$. Then we must
solve the equation
$$
(I+A(z))u(z)=w(z), \quad 
u(z)=\begin{pmatrix}
c_1(z)\\
\ldots\\
c_N(z)
\end{pmatrix}, \quad 
w(z) = \begin{pmatrix}
h_1(z)\\
\ldots\\
h_N(z)
\end{pmatrix}.
$$
Our hypothesis shows that $a(z) = \det(I + A(z)) \neq 0$ for 
$0 < \Im z < 1$. Moreover, $a(z)$ is holomorphic in $\Cp$ and 
$$
|a(z)|\le C_N(1 + |z|)^{MN} e^{\alpha N \Im z},\quad z \in \overline{\Cp}.
$$
Furthermore, we have $ u(z) = \frac{1}{a(z)} D(z) w(z)$
with a matrix $D(z)$ such that 
$$
\|D(z)w(z)\| \le C_N'(1 + |z|)^{NM} e^{\alpha N\Im z}\|g\|,\quad z \in \overline{\Cp}.
$$
Therefore,
$$
\|(I + B(x + \ii/2))^{-1}\| \le \frac{1}{|a(x + \ii/2)|} D_N(1 + |x|)^{NM} 
e^{\alpha N/2},
$$

An application of Proposition 2.1 yields a lower bound of $|a(x + \ii /2)|$
and the proof is complete.
\end{proof}

\def\t1{\mathcal T_1}
Now consider an operator valued holomorphic function
$$
A(z):\: z\in \Cp \to \t1
$$
where $\t1$ denotes the space of trace class operators in $H$ 
with the norm $\|\cdot\|_1$. Recall that for every $B \in \t1$ we can define the determinant 
$$
\det(I + B) = \prod_{j} (1 + \lambda_j(B)),
$$
$\lambda_j(B)$ being the eigenvalues of $B$ and
$$ 
|\det(I + B)| \le e^{\|B\|_1}.
$$
Moreover, given $B \in \t1$ we may consider the function
$$
F_B(\mu) = \bigl[\det(I + \mu B)\bigr] (I + \mu B)^{-1}
$$
which extends from the set $\{\mu\in\C: -\mu^{-1} \notin \sigma(B)\}$ 
to an entire operator valued function in $\C$ such that
$$
|F_B(\mu)| \le e^{\|B\|_1 |\mu|},\quad \mu \in \C.
$$
We refer to \cite[Chapter XIII, Section 17]{RS} 
for the above mentioned properties. 

Next, if $A(z)$ is holomorphic in $\Cp$, then the function 
$\det(I + A(z))$ is also holomorphic in $\Cp$ (see for instance 
\cite{RS}), and if $I+A(z_0)$ is invertible, then we have 
$\det(I + A(z_0)) \neq 0$. 
An application of Proposition 2.1 leads to the following

\begin{thm} Let $A(z)$ be a holomorphic function in $\Cp$ with values in $\t1$ such that 
$$
\|A(z)\|_1 \le C( 1 +|z|),\quad z \in \Cp.
$$ 
Assume that $I + A(x)$ is unitary for $x\in \R$ and suppose that for 
$0 < \Im z < 1$ the operator $I + A(z)$ is invertible. Then for every 
$\varepsilon>0$ we have
\begin{equation} 
\label{eq:3.3}
\|(I + A(x + \ii/2))^{-1}\| \le C_{\varepsilon} e^{\varepsilon |x|^2}.
\end{equation}
\end{thm}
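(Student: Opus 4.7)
The plan is to reduce the operator estimate to a scalar zero-free estimate on the determinant $d(z)=\det(I+A(z))$ and then invoke Proposition 2.1. The identity $F_{A(z)}(1)=d(z)(I+A(z))^{-1}$ combined with $\|F_{A(z)}(1)\|\le e^{\|A(z)\|_1}\le e^{C(1+|z|)}$ yields
$$
\|(I+A(z))^{-1}\|\le\frac{e^{C(1+|z|)}}{|d(z)|}.
$$
Thus it suffices to prove that, for every $\varepsilon>0$, $|d(x+\ii/2)|\ge e^{-\varepsilon x^2}$ for $|x|$ large; the linear exponential in the numerator is then absorbed into $C_\varepsilon e^{\varepsilon x^2}$.

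To prepare $d$ for Proposition 2.1 I collect three scalar inputs. First, $d$ is holomorphic in $\Cp$, and $|\det(I+B)|\le e^{\|B\|_1}$ together with the growth hypothesis gives the isotropic bound $|d(z)|\le e^{C(1+|z|)}$. Second, since $I+A(x)$ is unitary with trace class perturbation for $x\in\R$, its eigenvalues lie on the unit circle and the product formula for the determinant forces $|d(x)|=1$. Third, the invertibility hypothesis translates into $d(z)\ne 0$ in the strip $0<\Im z<1$.

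The main step, and the principal obstacle, is sharpening the isotropic bound on $d$ to the anisotropic form $|d(z)|\le C'e^{\alpha \Im z}$ required by Proposition 2.1. To do this I will apply the Phragm\'en--Lindel\"of principle separately in the first and second quadrants of $\Cp$ to the auxiliary function $g(z)=d(z)e^{\ii C z}$: on $\R$ we have $|g|=1$; on the positive imaginary semi-axis $|g(\ii y)|\le e^{C(1+y)}e^{-Cy}=e^C$; and in each quadrant $|g(z)|\le e^{C(1+|z|)}$ is of order $1<2$. Hence $g$ is bounded by $e^C$ in $\overline{\Cp}$, which gives $|d(z)|\le e^C\,e^{C\Im z}$ in $\overline{\Cp}$.

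Proposition 2.1 then applies to $d$ and yields $\log|d(x+\ii/2)|/x^2\to 0$ as $|x|\to\infty$, delivering the required lower bound $|d(x+\ii/2)|\ge e^{-\varepsilon x^2}$ for every $\varepsilon>0$ and $|x|$ large. Substituting into the displayed inequality above and absorbing the $e^{C(1+|x|)}$ factor proves (\ref{eq:3.3}).
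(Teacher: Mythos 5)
Your proof is correct and follows essentially the same route as the paper: bound $\|(I+A(z))^{-1}\|$ by $\|F_{A(z)}(1)\|/|\det(I+A(z))|$, note $|\det(I+A(x))|=1$ on $\R$ and zero-freeness in the strip, and apply Proposition 2.1 to the determinant. The Phragm\'en--Lindel\"of step you single out as the main obstacle is precisely the content of Remark 2.2, which the paper's proof invokes; you have simply written it out explicitly (with $M=0$, $\alpha=C$).
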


\begin{proof} We have $|\det(I + A(x))| = 1$ for $x \in \R$, 
$\det(I + A(z)) \neq 0$ for $0 < \Im z < 1$ and
$$
|\det(I + A(z))|\leq C_1 e^{C|z|},\quad z \in \Cp.
$$
By Proposition 2.1 and Remark 2.2, we obtain a lower bound for  
$|\det(I + A(x + \ii/2))|$. Combining this bound with the estimate
$$
\|F_{A(z)}(1)\| \le C_1 e^{C|z|}, \quad z \in \Cp,
$$ 
we obtain (\ref{eq:3.3}).
\end{proof}

Proposition 2.6 together with the above argument gives us immediately the following

\begin{thm} Under the assumptions of Theorem $3.2$ suppose that the points $z \in \Cp$ for which $I + A(z)$ is not invertible
satisfy the separation condition \eqref{eq:2.9}. Then we have the estimate
\begin{equation} \label{eq:3.4}
\|(I + A(x + \ii/2))^{-1}\| \leq C e^{c|x|}, \quad x \in \R.
\end{equation}
\end{thm}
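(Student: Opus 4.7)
The plan is to mirror the proof of Theorem 3.2, replacing the appeal to Proposition 2.1 by an appeal to Proposition 2.6, thereby upgrading the $\exp(\varepsilon|x|^2)$ bound to the sharper $\exp(c|x|)$.

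First I set $f(z) = \det(I + A(z))$. As in the proof of Theorem 3.2, $f$ is holomorphic in $\Cp$ and satisfies
$$|f(z)| \le \exp\bigl(\|A(z)\|_1\bigr) \le C_1 e^{C|z|}, \qquad z \in \Cp.$$
Since $I + A(x)$ is unitary for $x \in \R$, we have $|f(x)| = 1$, so in particular $\log(1/|f(x)|) = 0 = \mathcal{O}(|x|)$. The zero set $\Lambda$ of $f$ in $\Cp$ coincides with the set of points where $I + A(z)$ fails to be invertible; by the invertibility hypothesis in the strip we have $\Im \lambda \ge 1$ for every $\lambda \in \Lambda$, and by assumption $\Lambda$ satisfies the separation condition \eqref{eq:2.9}.

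Applying Proposition 2.6 (together with Remark 2.8, which removes the bounded-multiplicity requirement; see below) then yields
$$-\log|f(x + \ii/2)| = \mathcal{O}(|x|), \qquad |x| \to \infty.$$
To convert this lower bound on the determinant into an upper bound on $(I + A)^{-1}$, I use the operator-valued factorization $(I + A(z))^{-1} = F_{A(z)}(1)/f(z)$, valid wherever $f(z) \ne 0$, combined with the Fredholm estimate
$$\|F_{A(z)}(1)\| \le e^{\|A(z)\|_1} \le C_1 e^{C|z|},$$
exactly as in the proof of Theorem 3.2. Dividing these two bounds at $z = x + \ii/2$ produces \eqref{eq:3.4}.

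The main obstacle is justifying the applicability of Proposition 2.6: that proposition is phrased for functions whose zeros have bounded multiplicity, yet the vanishing order of $\det(I + A(z))$ at a point $\lambda \in \Lambda$ is not a priori controlled by any hypothesis of Theorem 3.3. This is where Remark 2.8 becomes essential. Its content is that the proof of Proposition 2.6 needs only the counting bound \eqref{eq:2.10}, and this bound follows directly from the separation condition \eqref{eq:2.9} by an elementary area estimate for $\{w : \Im w \ge 0,\; |x - w| < 2^n + 1\}$, with no reference to multiplicities whatsoever. Once this reduction is noted, the proof is a routine combination of ingredients already developed in Sections 2 and 3.
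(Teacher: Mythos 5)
Your route is the paper's route: the published proof of Theorem 3.3 consists precisely of applying Proposition 2.6 to $f(z)=\det(I+A(z))$ and reusing the mechanism of Theorem 3.2, namely $(I+A(z))^{-1}=F_{A(z)}(1)/\det(I+A(z))$ with $\|F_{A(z)}(1)\|\le e^{\|A(z)\|_1}\le C_1e^{C|z|}$, so your first two paragraphs coincide with it (one small omission: since the determinant only satisfies $|f(z)|\le e^{C|z|}$ rather than the bound $C(1+|z|)^Me^{\alpha\Im z}$ assumed in Proposition 2.6, one should, exactly as the paper does in Theorem 3.2, invoke the Phragm\'en--Lindel\"of reduction of Remark 2.2; with $|f(x)|=1$ on $\R$ this is harmless).

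Where you go beyond the paper --- the multiplicity discussion --- your fix does not work as stated. Remark 2.8 does not assert that the separation condition alone yields \eqref{eq:2.10} ``with no reference to multiplicities''; it asserts that the conclusion of Proposition 2.6 persists if one takes \eqref{eq:2.10} itself as the hypothesis, and in the proof the quantity ${\rm card}\,\Lambda_n$ must count zeros according to their multiplicities: the Blaschke factor of the Nevanlinna factorization contains each zero with its multiplicity, and the bound \eqref{eq:2.11} on $A_n$, which drives the final estimate, is obtained from ${\rm card}\,\Lambda_n$ together with $\Im\lambda\le 2^n$. The area/separation argument bounds only the number of \emph{distinct} zeros; the bounded-multiplicity hypothesis of Proposition 2.6 is exactly what converts that into the count with multiplicity. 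Under the hypotheses of Theorem 3.3 nothing controls the order of vanishing of $\det(I+A(z))$ at a point where $I+A(z)$ fails to be invertible (a single point of huge multiplicity satisfies \eqref{eq:2.9} vacuously), so your argument --- and, to be fair, the paper's one-line proof, which tacitly carries along Proposition 2.6's multiplicity hypothesis --- needs the additional assumption that these vanishing orders are bounded, or at least that \eqref{eq:2.10} holds with multiplicities counted. With that assumption made explicit, the rest of your proof is correct and identical to the paper's.
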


\begin{rem} In the scattering theory the scattering operator 
$S(z)=I+K(x)$ is unitary for $x\in\R$, and the scattering determinant 
$a(z)=\det(I+K(z))$ is holomorphic in $\Cp$. Finding an estimate for 
$a(z)$ in $\Cp$ is rather complicated. It was proved in \cite{PZ} that we have
$$
|a(z)|\le C_1 e^{\alpha |z|^{n-1} \Im z},\quad \alpha \ge 0, z \in \Cp.
$$
\end{rem}

Thus it is interesting to examine the estimates of holomorphic functions $f(z)$ in $\Cp$ growing like
$$
|f(z)|\le e^{\alpha |z|^{\gamma}}, \quad \gamma > 1, \forall z \in \Cp.
$$
In this direction the results in Section 2 show that without some additional conditions on $f(z)$ we cannot expect to obtain lower bounds on $|f(z)|$ for $0 < \Im z < 1$ better than those obtained in Proposition 2.4.

\end{document}